\newtheorem{thm}{Theorem}
\theoremstyle{definition}
\newtheorem{defn}[thm]{Definition}
\newtheorem{ex}[thm]{Example}
\numberwithin{equation}{section}
\numberwithin{thm}{section}
\newcommand\zero{{\ensuremath{e}}}
\newcommand\one{{\ensuremath{f_{-1,-1}}}}
\newcommand\three{{\ensuremath{f_{1,-1}}}}
\newcommand\seven{{\ensuremath{f_{-1,1}}}}
\newcommand\nine{{\ensuremath{f_{1,1}}}}
\newcommand\Rr{\textup{\sc{r}}}
\newcommand\Ll{\textup{\sc{l}}}
\newcommand\Mm{\textup{\sc{m}}}
\newcommand\calJ{\mathcal{J}}
\renewcommand\l{l}
\newcommand{\ep}{\varepsilon}
\newcommand{\qbinom}[3]{\genfrac{[}{]}{0pt}{}{#1}{#2}_{#3}}
\begin{document}
\pagestyle{headings}

\title{Solving multivariate functional equations}

\author{Michael Chon, Christopher R.\ H.\ Hanusa, and Amy Lee}
\address{Department of Mathematics, Queens College (CUNY), 65-30 Kissena Blvd.,\newline Flushing, NY 11367, U.S.A.}
\email{{\tt mchon89@gmail.com}, {\tt chanusa@qc.cuny.edu}, {\tt alee0143@gmail.com}}

\begin{abstract}
This paper presents a new method to solve functional equations of multivariate generating functions, such as 
$$F(r,s)=e(r,s)+xf(r,s)F(1,1)+xg(r,s)F(qr,1)+xh(r,s)F(qr,qs),$$ giving a formula for $F(r,s)$ in terms of a sum over finite sequences.  We use this method to show how one would calculate the coefficients of the generating function for parallelogram polyominoes, which is impractical using other methods.  We also apply this method to answer a question from fully commutative affine permutations.  
\end{abstract}

\subjclass[2010]{Primary 65Q20, 05A15; Secondary 05A05, 05B50, 05C38, 05E15.
}

\keywords{functional equation, functional recurrence, combinatorial statistic, parallelogram polyomino, staircase polyomino, affine permutation, fully commutative}

\maketitle

\section{Introduction}

Some generating functions are most naturally defined by a functional equation; in this article, we discuss a new method for solving certain functional equations involving multiple variables.  

We set the stage with an elementary example: there are many families of combinatorial objects (e.g., binary trees, Dyck paths, triangulations of convex polygons; see \cite{catadd}) whose generating function satisfies the functional equation
\begin{equation*}
C(x) = 1 + x C(x)^2.
\end{equation*}
The generating function $C(x)=\sum_{n\geq0} C_nx^n$ is a formal power series in one variable $x$ that marks some statistic (often size) on the family of combinatorial objects.  We can interpret the above equation to mean that every object in the family can either be represented as an object of size zero (an empty object) or as being composed of two smaller objects from the family.  The solution to the above equation is 
\begin{equation*}
 C(x) = \frac{1-\sqrt{1-4x}}{2x},
\end{equation*}
and the coefficients of the power series expansion of this generating function are the Catalan numbers.  This gives a conceptual reason for the prevalence of Catalan numbers in combinatorics.  

When we investigate multiple statistics at the same time, the functional equations can become more complicated.   Suppose we are trying to solve for the generating function $F(s,t,x,y,q)$ that satisfies a functional equation of the type
\begin{equation}
F(s)=xe(s)+xf(s)F(1)+xg(s)F(qs).
\label{eq:MBMrecurrence}
\end{equation}
Here we have suppressed the variables $t$, $x$, $y$, and $q$ to simplify the notation; the reader should interpret $F(sq)$ as $F(sq,t,x,y,q)$.  In addition, the functions $e$, $f$, and $g$ may be formal power series in all the variables.  The combinatorial significance of a term like $F(sq)$ is that the statistic marked by $q$ increases at the same time as the statistic marked by $s$ as the objects are being built.  

Functional equations of this type arise naturally when enumerating with statistics combinatorial objects such as polyominoes \cite{brakmbm}, plane trees \cite{barcucci-tree}, lattice paths \cite{barcucci_q}, and pattern-avoiding permutations \cite{barcucci}.  
Bousquet-M\'elou proved in \cite[Lemma 2.3]{mbm-columnconvex} that the solution to Equation~\eqref{eq:MBMrecurrence} is
\begin{equation*}
F(s)=\frac{E(s) + E(1)G(s) - E(s)G(1)}{1-G(1)},
\label{eq:MBMsoln}
\end{equation*}
where
\[E(s) = \sum_{n \geq 0} x^{n+1} g(s) g(sq) \cdots g(sq^{n-1}) e(sq^n)\]  and \[G(s) = \sum_{n \geq 0} x^{n+1} g(s) g(sq) \cdots g(sq^{n-1}) f(sq^n).\]  Bousquet-M\'elou used this lemma, as well as a generalization involving derivatives of $F$ with respect to $s$, to find the generating function for various classes of column-convex polyominoes.  Bousquet-M\'elou and Brak's survey on enumerating polyominoes and polygons \cite{brakmbm} is especially recommended reading.

Theorem~\ref{thm:general} gives the solution to generating functions defined by a functional equation that simultaneously replaces multiple variables, such as in
\begin{equation*}
F(r,s)=e(r,s)+xf(r,s)F(1,1)+xg(r,s)F(qr,1)+xh(r,s)F(qr,qs).
\label{eq:139}
\end{equation*}
Our method applies to functional equations in an arbitrarily large number of formal variables $r_1$ through $r_m$ and with arbitrarily many terms in the functional equation where each $r_i$ can be replaced by $1$ or $q^jr_i$ for $j\geq 0$.  

Our proof is rather elementary---we repeatedly apply the functional equation and take the formal power series limit.  Our main result (Theorem~\ref{thm:general}) is stated in terms of a sum over finite sequences.   More importantly, in principle our method allows for the calculation of the coefficients of the generating function.  This is in contrast to Bousquet-M\'elou's result, which gives a quotient of $q$-Bessel functions, both of which are complicated infinite sums.  Our method does not replace Bousquet-M\'elou's method nor the powerful kernel method \cite{banderier}, which applies to many additional functional equations.  We refer the reader interested in other solution methods to Bousquet-M\'elou and Jehanne's \cite{catalytic}.

In Section~\ref{sec:main}, we prove Theorem~\ref{thm:general} and demonstrate how it applies to some simple functional equations including Equation~\eqref{eq:MBMrecurrence}.  In Section~\ref{sec:mbm}, we apply our method to the functional equation of parallelogram polyominoes and manipulate the solution to show how one would find the coefficients of the corresponding generating function.  In Section~\ref{sec:fc}, we apply a trivariate version of Theorem~\ref{thm:general} to the study of fully commutative affine permutations, which was the original motivation for this study.  An original analysis of fully commutative affine permutations in \cite[Lemma 3.12]{HJ1} involves an unwieldy {\em ad hoc} calculation that we are able to replace by working it into the larger framework of solving multivariate functional equations.

\section{Main Result}\label{sec:main}
We develop some notation in order to state our main result. Let $\mathbf{r}$ denote the set of formal variables $\{r_1,\hdots,r_m\}$.  Our focus will be on $F(\mathbf{r})$, a formal power series in formal variables $x$, $q$, and $\mathbf{r}$, which is most easily described using a functional equation, as follows.  


Throughout this article, we let $\mathbf{j}=(j_1,\hdots,j_m)$ denote an $m$-tuple of integers greater than or equal to $-1$ and let $\calJ$ be a set of such $m$-tuples $\mathbf{j}$.  For each $\mathbf{j}\in\calJ$, let $f_\mathbf{j}(\mathbf{r})$ be a formal power series in the formal variables $x$, $q$, and $\mathbf{r}$.  We also let $e(\mathbf{r})$ be a formal power series in $x$, $q$, and $\mathbf{r}$.  We consider the 
 functional equation
\begin{equation}\label{eq:geneq}
F(\mathbf{r})=e(\mathbf{r})+\sum_{\mathbf{j}\in\calJ} xf_{\mathbf{j}}(\mathbf{r})F\big((q^{j_1}r_1)^{\ep(j_1)},\hdots,(q^{j_m}r_m)^{\ep(j_m)}\big),
\end{equation}
where
\[\ep(j)=\left.\begin{cases} 1 & \text{if $j\geq 0$} \\ 0 & \text{if $j=-1$}\end{cases}\right\}.\]  
This setup allows for arbitrarily many terms in the functional equation, where in each term, each variable $r_i$ can be replaced by $1$ (when $j_i=-1$) or $q^{j_i}r_i$ for $j_i\geq 0$.


We always let $J=(\mathbf{j}_{1},\hdots,\mathbf{j}_{n})$ denote a sequence of $m$-tuples, where each $\mathbf{j}_{k}$ is an integer sequence $\mathbf{j}_{k}=(j_{1,k},\hdots,j_{m,k})$.  For such a sequence $J$, we define two families of integers $a_{i,k}$ and $b_{i,k}$ for integers $i$ and $k$ satisfying $1\leq i\leq m$ and $1\leq k\leq n+1$.  First, define $a_{i,1}=0$ and $b_{i,1}=1$ for all $i$ satisfying $1\leq i\leq m$. 


Then, for all $i$ and $k$ satisfying $1\leq i\leq m$ and $2\leq k\leq n+1$, determine $a_{i,k}$ and $b_{i,k}$ in relation to the sequence $(j_{i,1},j_{i,2},\hdots,j_{i,k-1})$ consisting of the $i$-th entry of the first $k-1$ entries of $J$. If $-1$ occurs in this sequence, define $b_{i,k}=0$ and $a_{i,k}$ to be the sum of the entries of the sequence after the last occurrence of $-1$; otherwise define $b_{i,k}=1$ and set $a_{i,k}$ to be the sum of all entries in this sequence.  



\begin{thm}\label{thm:general} 
Under the framework above, an explicit expression for $F(\mathbf{r})$ is 
\begin{equation}\label{eq:soln}
\begin{aligned}
F(\mathbf{r})&=\sum_{n\geq 0} x^{n}\times
\\
&\sum_{\substack{J=(\mathbf{j}_{1},\hdots,\mathbf{j}_{n})\\{\mathbf{j}_k\in\calJ}}} \!\!\!\!e(q^{a_{1,n+1}}r_1^{b_{1,n+1}},\hdots,q^{a_{m,n+1}}r_m^{b_{m,n+1}})\prod_{k=1}^{n} f_{\mathbf{j}_{k}}(q^{a_{1,k}}r_1^{b_{1,k}},\hdots,q^{a_{m,k}}r_m^{b_{m,k}}),
\end{aligned}
\end{equation}
where the second sum is over all length $n$ sequences $J$ of $m$-tuples from $\calJ$.
\end{thm}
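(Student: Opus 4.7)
The plan is to iterate the functional equation \eqref{eq:geneq} and pass to the limit in the $x$-adic topology on formal power series. Since each application of \eqref{eq:geneq} introduces a factor of $x$, every surviving occurrence of $F$ after $n$ iterations carries a prefactor $x^n$, which contributes nothing to the coefficient of $x^d$ for any $d<n$. Consequently, the only real work is book-keeping: tracking which arguments appear inside $F$, $e$, and the $f_{\mathbf{j}}$ at each stage, and matching them with the combinatorial quantities $a_{i,k}, b_{i,k}$ defined in the statement.

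First I would record the argument recursion. Starting from $F(\mathbf{r})=F(q^{a_{1,1}}r_1^{b_{1,1}},\ldots,q^{a_{m,1}}r_m^{b_{m,1}})$ with $a_{i,1}=0$ and $b_{i,1}=1$, one application of \eqref{eq:geneq} replaces the argument of $F$ by $\bigl((q^{j_{i,1}}r_i)^{\ep(j_{i,1})}\bigr)_i$, which equals $1$ in coordinate $i$ when $j_{i,1}=-1$ and $q^{j_{i,1}}r_i$ when $j_{i,1}\geq 0$. Iterating this substitution yields the transition rule
\[(a_{i,k+1},b_{i,k+1}) = \begin{cases}(0,\,0) & \text{if } j_{i,k}=-1,\\ (a_{i,k}+j_{i,k},\, b_{i,k}) & \text{if } j_{i,k}\geq 0.\end{cases}\]
A direct induction on $k$ shows that the quantities so defined coincide with the $a_{i,k}$ and $b_{i,k}$ from the combinatorial description: $b_{i,k}=0$ exactly when some $-1$ appears in $(j_{i,1},\ldots,j_{i,k-1})$, and $a_{i,k}$ is the sum of entries after the last $-1$ (or the full sum if no $-1$ occurs).

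The main step is an induction on $n\geq 0$ establishing
\begin{align*}
F(\mathbf{r}) &= \sum_{k=0}^{n-1} x^k \!\!\sum_{\substack{J=(\mathbf{j}_{1},\ldots,\mathbf{j}_{k})\\ \mathbf{j}_l\in\calJ}}\!\! e(\mathbf{r}^{(k+1)}) \prod_{l=1}^{k} f_{\mathbf{j}_l}(\mathbf{r}^{(l)}) \\
&\quad + x^n \!\!\sum_{\substack{J=(\mathbf{j}_{1},\ldots,\mathbf{j}_{n})\\ \mathbf{j}_l\in\calJ}}\!\! F(\mathbf{r}^{(n+1)}) \prod_{l=1}^{n} f_{\mathbf{j}_l}(\mathbf{r}^{(l)}),
\end{align*}
where $\mathbf{r}^{(k)}=(q^{a_{1,k}}r_1^{b_{1,k}},\ldots,q^{a_{m,k}}r_m^{b_{m,k}})$. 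The base case $n=0$ is $F(\mathbf{r})=F(\mathbf{r}^{(1)})$. For the inductive step, apply \eqref{eq:geneq} to each $F(\mathbf{r}^{(n+1)})$ in the remainder; the recursion above identifies the newly produced arguments with $\mathbf{r}^{(n+2)}$, and the outer sum absorbs the new index $\mathbf{j}_{n+1}$. Letting $n\to\infty$ in the $x$-adic topology eliminates the remainder and yields \eqref{eq:soln}.

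The main potential obstacle is the indexing in the inductive step: one must check carefully that appending $\mathbf{j}_{n+1}$ to the sequence $J$ updates the pair $(a_{i,n+1},b_{i,n+1})$ to $(a_{i,n+2},b_{i,n+2})$ in a way compatible with the combinatorial description (a case split on whether $j_{i,n+1}=-1$ or not). A secondary concern is that the double sum in \eqref{eq:soln} be a well-defined formal power series; this follows because each summand carries the explicit factor $x^n$, so only finitely many $n$ contribute to any coefficient of $x^d$, while for each fixed $n$ the inner sum is controlled by the hypothesis that \eqref{eq:geneq} itself is a meaningful identity in the ambient ring of formal power series.
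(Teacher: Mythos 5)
Your proposal is correct and follows essentially the same route as the paper: iterate the functional equation, track how each substitution updates the exponent data $(a_{i,k},b_{i,k})$, and pass to the formal power series ($x$-adic) limit so the remainder term with the surviving $F$'s vanishes. Your explicit transition rule and the finite-iteration identity with remainder make the argument somewhat more precise than the paper's informal description, but the underlying idea is identical.
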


\begin{proof}
Applying Equation~\eqref{eq:geneq} generates $|\calJ|$ new occurrences of $F$ for every initial occurrence of $F$; these new occurrences are each indexed by a $\mathbf{j}\in\calJ$ and weighted by $x$ times the function $f_{\mathbf{j}}$.  The only terms that do not continue to expand in successive applications of Equation~\eqref{eq:geneq} are those with a terminal $e(\mathbf{r})$ function.  As such, the new non-expanding terms generated by applying the functional equation an $(n+1)$-st time consist of one term for every sequence of length $n$ on the alphabet $\calJ$, each of which is multiplied by $x^n$.  Taking the formal power series limit of this iterative procedure, the powers of $x$ become arbitrarily large as to cause no occurrences of $F$ to remain.

The exponents $a_{i,k}$ and $b_{i,k}$ arise as the actions of $$F((q^{j_1}r_1)^{\ep(j_1)},\hdots,
(q^{j_m}r_m)^{\ep(j_m)})$$ are followed.  The initial values $a_{i,1}$ and $b_{i,1}$ are so defined since the first expansion is of $F(r_1,\hdots,r_m)$.  Subsequently, the precise sequence $J=(\mathbf{j}_{1},\hdots,\mathbf{j}_{n})$ determines the exponents of $q$ and the presence of the variable $r_i$.  The variable $r_i$ will not be present in any factor after an occurrence of $j_{i,k}=-1$; whereas, the $k$-th application of the recurrence increases the exponent of $q$ in entry $i$ by positive $j_{i,k}$, and resets the exponent of $q$ in entry $i$ to zero when $j_{i,k}=-1$.
\end{proof}

Independently, a $1$-dimensional analogue of our Equation~\eqref{eq:soln} was found by Mansour and Song in \cite[Lemma 2.1]{MansourSong}.  Their work uses the kernel method to solve systems of functional equations that are of a different type than our Equation~\eqref{eq:geneq}.  Both types of functional equation can be considered generalizations of Equation~\eqref{eq:MBMrecurrence}.

\begin{ex} We start with the simple functional equation 
\[F(r)=r+xF(r)+xF(qr).\]  The coefficients $c_{n,h}$ of the generating function $F(x,q,1)=\sum c_{n,h}x^nq^h$ count the number of ways to produce $h$ heads when flipping $n$ coins.  To apply Theorem~\ref{thm:general}, we note that $\calJ=\{0,1\}$ and that $e(r)=r$, $f_0(r)=1$, and $f_1(r)=1$.  We can now calculate $F(x,q,r)$ as in Equation~\eqref{eq:soln}:
\[\begin{aligned}
F(x,q,r)&=\sum_{n\geq 0} x^{n}\!\!\sum_{\substack{(j_1,\hdots,j_{n})\\j_k\in\{0,1\}}}\!\! q^{a_{n+1}}r^{b_{n+1}}\\
& =\sum_{n\geq 0} x^{n}\!\!\sum_{\substack{(j_1,\hdots,j_{n})\\j_k\in\{0,1\}}}\!\! q^{\sum_{k=1}^n j_k}r\\
&=\sum_{n\geq 0} (1+q)^nrx^{n}.\end{aligned}\] 
As expected, \[F(x,q,1)=\sum_{n\geq 0} (1+q)^nx^{n}.\] 
Alternatively, because $$F(x,q,r)=\frac{r}{1-x}+ \frac{x}{1-x}F(x,q,qr),$$ we can expand directly to find the equivalent expression, \[F(x,q,r)=\frac{r}{1-x-qx}.\]
\end{ex}

\begin{ex}\label{ex:MBMrecurrence} Suppose $F(s)$ satisfies 
\[F(s)=xe(s)+xf_{-1}(s)F(1)+xf_1(s)F(qs),\] as in Equation~\eqref{eq:MBMrecurrence}.  Theorem~\ref{thm:general} implies that 
\[F(s)=\sum_{n\geq 0} x^{n+1}\sum_{\substack{J=(j_1,\hdots,j_{n})\\j_k\in\{-1,1\}}} e(q^{a_{n+1}}s^{b_{n+1}})\prod_{k=1}^{n} f_{j_{k}}(q^{a_k}s^{b_{k}}),\]
where the exponents $a_i$ and $b_i$ depend on the exact sequence $J$ on the alphabet $\{-1,1\}$.  For all positive integers $k$, $a_k$ is the number of consecutive $1$s in $J$ ending with entry $k-1$ and 
\[b_k=\left.\begin{cases}1 & \text{if $j_1=\cdots=j_{k-1}=1$} \\
 0 & \text{otherwise}\end{cases}\right\}.\]
\end{ex}

\begin{ex}
A multivariate functional equation with nice symmetry is
\begin{equation}
\label{eq:mv}
\begin{aligned}
F(r,s)=\zero(r,s)&+x\one(r,s)F(1,1)+x\seven(r,s)F(1,qs)\\&+x\three(r,s)F(qr,1)+x\nine(r,s)F(qr,qs).
\end{aligned}
\end{equation}
When solving Equation~\eqref{eq:mv} in the form of Equation~\eqref{eq:soln}, the exponents $a_{i,k}$ have a simple description.  Given a sequence $J=(\mathbf{j}_{1},\hdots,\mathbf{j}_{n})$ on the alphabet
\[\calJ=\big\{\{-1,-1\},\{-1,1\},\{1,-1\},\{1,1\}\big\},\] define $J_i=(j_{i,1},\hdots,j_{i,n})$ for $i\in\{1,2\}$.  The exponent $a_{i,k}$ is the number of consecutive $1$s in $J_i$ ending with entry $k-1$.
\end{ex}

\section{Application: Parallelogram polyominoes}\label{sec:mbm}

A {\em parallelogram polyomino} (also called a {\em staircase polyomino}) is a horizontally and vertically convex union of $1\times 1$ lattice squares that touches the bottom-left and top-right corners of its bounding rectangle. (An example is given in Figure~\ref{fig:poly}.)  
\begin{figure}[b]
\begin{tikzpicture}[scale=.5]
\draw[step=1cm] (0,1) grid (1,3);
\draw[step=1cm] (1,1) grid (2,4);
\draw[step=1cm] (2,1) grid (3,4);
\draw[step=1cm] (3,3) grid (4,5);
\draw[step=1cm] (4,4) grid (5,5);
\end{tikzpicture}
\caption{A parallelogram polyomino with area $a=11$, left height $l=2$, right height $r=1$, width $w=5$, and height $h=4$.  Its contribution to the generating function $F(q,s,t,x,y)$ would be $q^{11}s^2tx^5y^4$.}
\label{fig:poly}
\end{figure}
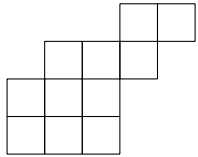
Parallelogram polyominoes are one of the first classes of polyominoes to have been enumerated---P\'olya \cite{PolyaPoly} found the generating function for parallelogram polyominoes by perimeter and area.  One remarkable feature is that the number of parallelogram polyominoes with perimeter $2n$ is the Catalan number $C_{n-1}$.  Indeed, Delest and Viennot \cite{DelestViennot} give a bijection between parallelogram polyominoes with perimeter $2n+2$ and Dyck paths of length $2n$ that at the same time accounts for area.  

For each parallelogram polyomino $P$, we can determine its area $a$, the height of its leftmost column $l$, the height of its rightmost column $r$, and the width $w$ and height $h$ of its bounding rectangle.  Bousquet-M\'elou \cite{mbm-columnconvex} used a layered approach (constructing the polyominoes column by column) to prove that the parallelogram polyomino generating function 
\[F(q,s,t,x,y)=\sum_P q^{a(P)}s^{l(P)}t^{r(P)}x^{w(P)}y^{h(P)}\]  satisfies the functional equation
\[F(s)=\frac{xstyq}{1-styq}+\frac{xsq}{(1-sq)(1-syq)}F(1)+\frac{-xsq}{(1-sq)(1-syq)}F(sq).\]
This is of the form in Equation~\eqref{eq:MBMrecurrence}, discussed in Example~\ref{ex:MBMrecurrence}.  As such,
\[F(s)=\sum_{n \geq 0}x^{n+1}\!\!\!\!\!\sum_{J\in\{-1,1\}^n} \!\!\!\!(-1)^{o(J)}\frac{q^{a_{n+1}+1}s^{b_{n+1}}ty}{1-q^{a_{n+1}+1}s^{b_{n+1}}ty}\prod_{k=1}^{n}\frac{q^{a_{k}+1}s^{b_{k}}}{(1-q^{a_{k}+1}s^{b_k})(1-q^{a_{k}+1}s^{b_k}y)},\]
where the sum is sequences $J$ of length $n$ over the alphabet $\{-1,1\}$ and $o(J)$ is the number of entries of $J$ equal to $1$. Expanding the middle factor and simplifying, 
\begin{equation}
\begin{aligned}
F(s)&=\sum_{n \geq 0}x^{n+1} \!\!\!\!\!\sum_{J\in\{-1,1\}^n}\!\!\!\! (-1)^{o(J)} \sum_{m \ge 1}(tyq^{a_{n+1}+1}s^{b_{n+1}})^m\prod_{k=1}^{n}  \frac{q^{a_{k}+1}s^{b_{k}}}{(1-q^{a_{k}+1}s^{b_k})(1-q^{a_{k}+1}s^{b_k}y)} \\
&=\sum_{n \geq 0} \sum_{m \ge 1} x^{n+1}t^m \!\!\!\!\!\sum_{J\in\{-1,1\}^n} \!\!\!\!(-1)^{o(J)}(yq^{a_{n+1}+1}s^{b_{n+1}})^m\prod_{k=1}^{n}  \frac{q^{a_{k}+1}s^{b_{k}}}{(1-q^{a_{k}+1}s^{b_k})(1-q^{a_{k}+1}s^{b_k}y)}.\\
\end{aligned}
\end{equation}
We conclude that the coefficient of $x^{n+1}t^m$ is
\[\sum_{J\in\{-1,1\}^n}\!\!\!\!(-1)^{o(J)}(yq^{a_{n+1}+1}s^{b_{n+1}})^m\prod_{k=1}^{n}  \frac{q^{a_{k}+1}s^{b_{k}}}{(1-q^{a_k+1}s^{b_k})(1-q^{a_k+1}s^{b_k}y)}.\]
Extraction with respect to $s$ or $y$ is possible by conditioning on the length of the initial run of $1$s in $J$.

\section{Application: Affine permutations}\label{sec:fc}

The methods developed in Section~\ref{sec:main} provide an alternative to a calculation of Hanusa and Jones \cite{HJ1}, who enumerated fully commutative affine permutations by number of inversions.  We do not require the theory of affine permutations; we prove an equivalent result on $(L)(M)(R)$-words. We recall that a permutation $w$ is $321$-avoiding if there do not exist entries $i<j<k$ such that $w_i>w_j>w_k$.

\begin{defn}
Let $L$, $M$, and $R$ be positive integers.  We define an $(L)(M)(R)$-word to be a $321$-avoiding permutation of the set $\{1,2,\hdots,L+M+R\}$ where the numbers $\{1,\hdots,L\}$ are in increasing order, the numbers $\{L+M+1,\hdots,L+M+R\}$ are in increasing order, and every number in the set $\{1,\hdots,L\}$ is to the left of every number in the set $\{L+M+1,\hdots,L+M+R\}$.  
\end{defn}

When the exact value of an integer in a word is of no consequence, we may replace integers in $\{1,\hdots,L\}$ with a letter $\Ll$, integers in $\{L+1,\hdots,L+M\}$ with a letter $\Mm$, and integers in $\{L+M+1,\hdots,L+M+R\}$ with a letter $\Rr$.  

Let $w=w_1w_2\hdots w_n$ be an $(L)(M)(R)$-word.  We say that $(i,j)$ is an {\em inversion} if $i<j$ and $w_i>w_j$ and we say that $w$ has an {\em $\Mm$-descent} in position $i$ if the $i$-th $\Mm$ is larger than the $(i+1)$-st $\Mm$.

\begin{defn}
For an $(L)(M)(R)$-word $w$ with at least one $\Mm$-descent, we define the following combinatorial statistics.  The statistic $n(w)=L+M+R$ is the size of the word, and will be marked by the variable $x$.  (It is also helpful to define $m(w)=M$.)  We let $\l(w)$ be the number of inversions of $w$, marked by the variable $q$.  Then we define $\alpha(w)$ to be the number of $\Mm$'s after the last $\Mm$-descent and before the leftmost $\Rr$ (marked by variable $r$), $\beta(w)$ to be the number of $\Rr$'s before the last $\Mm$ (marked by variable $s$), and $\gamma(w)$ to be the number of $\Mm$'s to the right of the leftmost $\Rr$ (marked by variable $t$).  
\end{defn}

\begin{ex}\label{ex:LMR}
The $(4)(7)(2)$-word 
\[w=1, 2, \mathbf{7}, 3, \mathbf{8}, 4, \mathbf{5}, \mathbf{10}, \mathbf{6}, \mathbf{12}, \mathbf{9}, 13, 14, \mathbf{11}\]
may be written $\Ll\Ll\Mm\Ll\Mm\Ll\Mm\Mm\Mm\Mm\Mm\Rr\Rr\Mm$. (The 7 $\Mm$'s are the bolded numbers in the sequence.) Note that $w$ is $321$-avoiding, has a total of $l(w)=11$ inversions and has $\Mm$-descents in positions $2$ \textup{($8\!\searrow\!5$)}, $4$ \textup{($10\!\searrow\!6$)}, and $6$ \textup{($12\!\searrow\!9$)}.   Last, we see that $\alpha(w)=1$, $\beta(w)=2$, and $\gamma(w)=1$. 
\end{ex}

  
Hanusa and Jones \cite[Lemma 3.12]{HJ1} determine the generating function $\sum x^{n(w)} q^{\l(w)}$ for all $(L)(M)(R)$-words $w$ with at least two $\Mm$-descents. Their formula requires calculating coefficients of a generating function that is a quotient of $q$-Bessel functions and subsequently inserting those coefficients into another generating function.  

As an application of Theorem~\ref{thm:general}, we show that it is possible to calculate an expression for $\sum x^{n(w)} q^{\l(w)}$ that does not require the original awkward substitution.  In the discussion that follows, we suppose that $L$ and $R$ are positive integers and define \[G(x,q,r,s,t)=\sum_w x^{m(w)} q^{\l(w)} r^{\alpha(w)} s^{\beta(w)} t^{\gamma(w)}\] to
 be the generating function for $(L)(M)(R)$-words with at least two $\Mm$-descents.  A natural partition of $(L)(M)(R)$-words with at least two $\Mm$-descents are into those with zero $\Mm$'s to the right of the leftmost $\Rr$, and those with one or more $\Mm$'s to the right of the leftmost $\Rr$.  The former will be counted by $F_0(x,q,r)$ in Theorem~\ref{thm:F0} and the latter will be counted by $F_1(x,q,r,s,t)$ in Theorem~\ref{thm:F1}, so that 
\[G(x,q,r,s,t)=F_0(x,q,r)+F_1(x,q,r,s,t).\]

We recall that the $q$-binomial coefficient $\qbinom{n}{k}{q}$ is defined as $$\qbinom{n}{k}{q}=\frac{(1-q^n)(1-q^{n-1})\cdots(1-q^{n-k+1})}{(1-q^k)(1-q^{k-1})\cdots(1-q)}$$ and enumerates the ways to intersperse $k$ smaller numbers with $n-k$ larger numbers, keeping track of the number of inversions incurred using the variable $q$.

\begin{thm}\label{thm:F0} Suppose $L$ and $R$ are positive integers.  Let $F_0$ be the generating function enumerating $(L)(M)(R)$-words with at least two $\Mm$-descents and with zero $\Mm$'s to the right of the leftmost $\Rr$.  Then $F_0(x,q,r)$ satisfies the functional equation
\begin{equation}\label{eq:affineFC2}
F_0(r)=\frac{E_0(r)}{1-xr}+\frac{xqr}{(1-qr)(1-xr)}F_0(1)-\frac{xqr}{(1-qr)(1-xr)}F_0(qr)
\end{equation}
where
\[
E_0(r)=\sum_{m\geq 0}\sum_{i=1}^{m-1}x^{m+1}\qbinom{L+i}{L}{q}\bigg(\qbinom{m}{i}{q}-1\bigg)\Bigg(\sum_{k=1}^{m-i-1} q^kr^k\Bigg).
\]
\end{thm}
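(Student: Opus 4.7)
The plan is to partition the $(L)(M)(R)$-words counted by $F_0$ into three classes whose generating functions reproduce, after rearrangement to $(1-xr)F_0(r) = E_0(r) + \frac{xqr}{1-qr}(F_0(1) - F_0(qr))$, the three terms on the right. As an initial simplification, since no $\Mm$ lies to the right of the leftmost $\Rr$, the $\Rr$'s form a contiguous tail and contribute no inversions; and $321$-avoidance forces every $\Ll$ to lie to the left of the $(j_1+1)$-st $\Mm$, where $j_1$ is the position of the first $\Mm$-descent. Hence the $\Ll$'s interleave only with the first $j_1$ $\Mm$'s, contributing the factor $\qbinom{L+j_1}{L}{q}$ that appears in $E_0$.

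Writing $n = m(w)$, I partition by the value of the last $\Mm$. Class (I) consists of words whose last $\Mm$ attains the maximum $\Mm$-value; deletion of this $\Mm$ bijects Class (I) onto $F_0$'s class, raising $\alpha$ by $1$ and creating no new inversions, so Class (I) contributes $xr F_0(r)$. Class (II) consists of words whose maximum $\Mm$-value sits at some position $i_M < n$ of the $\Mm$-subsequence; a $321$-avoidance argument (any descent strictly after the maximum would give a $321$-pattern starting at the maximum) forces the post-maximum portion to be strictly ascending, so the post-maximum block coincides with the last ascending run and $\alpha = n - i_M$.

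Class (II) splits further by whether deleting the maximum keeps the reduced word in $F_0$'s class. Class (IIa), the well-behaved part, consists of words where either the pre-maximum has at least two descents, or it has exactly one descent together with $m_{i_M - 1} > m_{i_M + 1}$. I would exhibit a bijection between Class (IIa) and pairs $(w, k')$ with $w \in F_0$'s class and $k' \in \{1, \ldots, \alpha(w)\}$, sending $(w, k')$ to the word obtained by inserting the new maximum into $w$'s $\Mm$-subsequence at position $m(w) + 1 - k'$; this position lies inside $w$'s last ascending run, which guarantees that $321$-avoidance is preserved. The weight changes by exactly $xq^{k'}r^{k'}$, since the $k'$ new inversions come from the inserted maximum paired with the $k'$ elements that follow it. Summing $\sum_{k'=1}^{\alpha(w)}(qr)^{k'} = \frac{qr(1-(qr)^{\alpha(w)})}{1-qr}$ and then over $w \in F_0$ yields the Class (IIa) generating function $\frac{xqr}{1-qr}(F_0(1) - F_0(qr))$.

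Class (IIb), where the pre-maximum has exactly one descent and $m_{i_M - 1} < m_{i_M + 1}$, should be enumerated directly and match $E_0(r)$. Writing the pre-maximum as ascending runs of lengths $a$ and $i_M - 1 - a$, the pre-maximum value-set splits as $V_1 \cup V_2$ with $|V_1| = a$, and the post-maximum is a set $V_3$; the condition $m_{i_M - 1} < m_{i_M + 1}$ together with $321$-avoidance forces $\max V_2 < \min V_3$, so $V_2$ and $V_3$ are the bottom and top blocks of $\{1, \ldots, n-1\} \setminus V_1$, and the pre-maximum descent reduces to the requirement $V_1 \neq \{1, \ldots, a\}$. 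A direct inversion count assigns weight $\qbinom{L+a}{L}{q}$ to the $\Ll$-interleaving with the first $a$ $\Mm$'s, weight $q^{\alpha^+}$ to the maximum-to-post-maximum inversions, and weight $\qbinom{n-1}{a}{q} - 1$ to the sum over admissible $V_1$ of $q^{\mathrm{inv}(V_1)}$; after the substitution $m = n-1$, $i = a$, $k = \alpha^+$ this reproduces $E_0(r)$ exactly. Combining the three classes gives $F_0(r) = xrF_0(r) + \frac{xqr}{1-qr}(F_0(1) - F_0(qr)) + E_0(r)$, which rearranges to the claimed functional equation. The main obstacles will be the $321$-avoidance argument that pins down the value structure in Class (IIb), and the verification that each bijection preserves the position $j_1$ of the first $\Mm$-descent so that the $\qbinom{L+j_1}{L}{q}$ $\Ll$-interleaving factor transfers cleanly---fortunately, every insertion and deletion used occurs strictly to the right of $j_1$, so the $\Ll$-interleaving is never disturbed.
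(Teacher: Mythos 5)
Your proof is correct and is essentially the same argument as the paper's: the paper runs West's generating tree forward (inserting the new maximal $\Mm$ into words with at least two $\Mm$-descents, plus the ``new'' words whose parent has only one $\Mm$-descent, which yield $E_0$), while you read the same decomposition backward by deleting the maximal $\Mm$ and classifying by whether the result stays in the class. Your Classes (I), (IIa), (IIb) correspond exactly to the paper's three contributions $xrF_0(r)$, $\tfrac{xqr}{1-qr}\bigl(F_0(1)-F_0(qr)\bigr)$, and $E_0(r)$, including the key bound $k\le m-i-1$ coming from the requirement that the pre-maximum retain a descent.
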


\begin{proof}
Similar to the proof in \cite{HJ1}, we use West's generating tree method \cite{West} to enumerate $(L)(M)(R)$-words with at least two $\Mm$-descents.  We fix $L$ and $R$ and determine all the ways in which we may insert an $(M+1)$-st $\Mm$ into an $(L)(M)(R)$-word (incrementing each number larger than $L+M$ by one), keeping track of how the combinatorial statistics change in each case.  

When we restrict to enumerating $(L)(M)(R)$-words with at least two $\Mm$-descents in which there is no $\Mm$ to the right of an $\Rr$, there is one contiguous set of positions into which we may insert $L+M+1$ that does not create a $321$-pattern---between the rightmost $\Mm$-descent and the leftmost $\Rr$.  We might visualize this as indicated below, where $^{\scriptscriptstyle\searrow}$ represents a descent and $_\bullet$ represents a possible insertion position.

\vspace{-.15in}
\[\Mm\Ll\Ll\Mm\Ll\Mm^{\scriptscriptstyle\searrow}\Mm\Mm^{\scriptscriptstyle\searrow}\Mm\Mm^{\scriptscriptstyle\searrow}{}_{\bullet}\Mm_\bullet\Mm_\bullet\Mm_\bullet\Rr\Rr\Rr\Rr\]

Condition on $k$, the number of $\Mm$'s to the right of where we insert $L+M+1$.  When $k=0$, $m(w)$ and $\alpha(w)$ increase by one and $\l(w)$ stays the same.  As $k$ ranges from $1$ to $\alpha(w)$, then $m(w)$ increases by one, $\l(w)$ increases by $k$, and $\alpha(w)$ becomes $k$.  From this we see that the $(L)(M+1)(R)$-words of this type generated from $(L)(M)(R)$-words of this type have generating function
\begin{equation*}
\begin{aligned}
&\sum_{w}x^{m(w)+1}q^{l(w)}r^{\alpha(w)+1}+\sum_{w}\sum_{k=1}^{\alpha(w)} x^{m(w)+1}q^{l(w)+k}r^k\\\
&=xr\sum_{w}x^{m(w)}q^{l(w)}r^{\alpha(w)}+x\sum_wx^{m(w)}q^{l(w)}\sum_{k=1}^{\alpha(w)}(qr)^k\\
&=xr\sum_{w}x^{m(w)}q^{l(w)}r^{\alpha(w)}+\frac{xqr}{1-qr}\sum_wx^{m(w)}q^{l(w)}\big(1-(qr)^{\alpha(w)}\big)\\
&=xrF_0(r)+\frac{xqr}{1-qr}F_0(1)-\frac{xqr}{1-qr}F_0(qr).\\
\end{aligned}
\end{equation*}
We also need to enumerate the $(L)(M+1)(R)$-words of this type that do not arise in this way.  More specifically, we enumerate $(L)(M+1)(R)$-words $w$ such that removing the number $L+M+1$ (and decrementing each larger element by one) leaves an $(L)(M)(R)$-word $w'$ with one $\Mm$-descent and no $\Mm$ to the right of an $\Rr$.   Suppose that the one $\Mm$-descent in $w'$  occurs in position $i$.  There are $\big(\qbinom{M}{i}{q}-1\big)$ ways to arrange the $\Mm$-entries in this way and $\qbinom{L+i}{L}{q}$ ways to intertwine the $\Ll$'s with the first $i$ $\Mm$'s.  We then reinsert $L+M+1$ to the right the existing $\Mm$-descent.  If there are $k$ $\Mm$'s to the left of $L+M+1$ (where $k$ ranges from $1$ to $m-i-1$), then $\l(w)=\l(w')+k$ and $\alpha(w)=k$.  Summing over all $m$ gives $E_0(r)$.  

Combining these new $(L)(M+1)(R)$-words with the $(L)(M+1)(R)$-words created using the generating tree and subsequently solving for $F_0(r)$ gives Equation~\eqref{eq:affineFC2}.
\end{proof}

\begin{thm}\label{thm:F1}
Suppose $L$ and $R$ are positive integers.  Let $F_1$ be the generating function enumerating $(L)(M)(R)$-words with at least two $\Mm$-descents and with at least one $\Mm$ to the right of the leftmost $\Rr$.  Then $F_1(x,q,r,s,t)$ satisfies the functional equation
\begin{equation}\label{eq:affineFC}
\begin{aligned}
F_1(r,s,t)=E_1(r,s,t)&+\frac{x}{1-qr}F_1(1,s,qt)-\frac{xqr}{1-qr}F_1(qr,s,qt)\\
&-\frac{xt(qs)^{R+1}}{1-qs}F_1(r,1,t)+\frac{xt}{1-qs}F_1(r,qs,t),
\end{aligned}
\end{equation}
where 
\[
\begin{aligned}
E_1(r,s,t)=&\, xt\bigg(\sum_{b=1}^R (qs)^b\bigg)F_0(r)+\sum_{m\geq 0}\sum_{i=1}^{m-1}x^{m+1}\qbinom{L+i}{L}{q}\bigg(\qbinom{m}{i}{q}-1\bigg)\times\\
&\hspace{1.7in}\Bigg(\sum_{k=1}^{m-i-1}\sum_{c=1}^{k}\sum_{b=1}^Rq^{k+b+c-1}r^{k-c}s^bt^{c}\qbinom{b+c-2}{b-1}{q}\Bigg).
\end{aligned}
\]
\end{thm}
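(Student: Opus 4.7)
The plan is to adapt the generating tree argument of Theorem~\ref{thm:F0}, tracking how inserting $L+M+1$ into an $(L)(M)(R)$-word shifts the statistics $\alpha$, $\beta$, $\gamma$, and $\ell$.  A preliminary $321$-avoidance observation is that the $\Mm$'s lying after the leftmost $\Rr$ must appear in increasing order, and all $\Ll$'s lie before the low point of the \emph{first} $\Mm$-descent; consequently the portion of the word strictly between the last $\Mm$-descent and the leftmost $\Rr$ contains no $\Ll$'s, and the only insertion positions for $L+M+1$ that avoid a $321$-pattern lie in one of two disjoint regions.

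I would first enumerate $(L)(M+1)(R)$-words in the $F_1$ class whose pre-image (obtained by removing $L+M+1$) already lies in $F_1$.  Region~(a) is strictly between the last $\Mm$-descent and the leftmost $\Rr$; conditioning on $k \in \{0, 1, \ldots, \alpha(w)\}$, the number of $\Mm$'s of $w$ remaining to the right of the insertion in this region, gives $\alpha_\text{new} = k$, unchanged $\beta$ and $\gamma$, and $\ell$ increased by $k + \gamma(w)$.  The extra $\gamma(w)$ records inversions between $L+M+1$ and the $\gamma(w)$ $\Mm$'s beyond the leftmost $\Rr$.  Summing $\sum_{k=0}^{\alpha(w)} (qr)^k = \frac{1 - (qr)^{\alpha(w)+1}}{1-qr}$ and aggregating over $w$ produces $\frac{x}{1-qr} F_1(1,s,qt) - \frac{xqr}{1-qr} F_1(qr,s,qt)$.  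Region~(b) is strictly to the right of the last $\Mm$ of $w$, possibly interspersed with the $R-\beta(w)$ trailing $\Rr$'s; conditioning on $j \in \{0, 1, \ldots, R-\beta(w)\}$ trailing $\Rr$'s appearing before the insertion gives $\beta_\text{new} = \beta(w) + j$, $\gamma_\text{new} = \gamma(w) + 1$, unchanged $\alpha$, and $\ell$ increased by $\beta(w) + j$; a parallel geometric-sum computation yields $\frac{xt}{1-qs} F_1(r,qs,t) - \frac{xt(qs)^{R+1}}{1-qs} F_1(r,1,t)$.

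Next I would identify the $(L)(M+1)(R)$-words in $F_1$ whose pre-image is not in $F_1$; these produce $E_1$.  There are two sub-cases.  When the pre-image is in $F_0$ (at least two $\Mm$-descents but $\gamma = 0$), the only way to land in $F_1$ is to insert $L+M+1$ after at least one of the $R$ $\Rr$'s, so that a single $\Mm$ appears beyond the leftmost $\Rr$; summing over the insertion position returns $xt\bigl(\sum_{b=1}^{R} (qs)^b\bigr) F_0(r)$.  When the pre-image has exactly one $\Mm$-descent, the insertion must simultaneously create a second $\Mm$-descent and produce $\gamma_\text{new} \geq 1$.  Following the recipe of Theorem~\ref{thm:F0}, one parametrizes the pre-image by its $\Mm$-count $m$ and its descent position $i$, yielding the familiar factors $\qbinom{L+i}{L}{q}$ (from intertwining the $L$ $\Ll$'s with the first $i$ $\Mm$'s) and $\qbinom{m}{i}{q} - 1$ (from arranging the $\Mm$-values with descent at $i$).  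The new ingredient is the intertwining of the trailing $m-i$ $\Mm$'s with the $R$ $\Rr$'s together with the insertion of $L+M+1$: setting $c = \gamma_\text{new}$, $b = \beta_\text{new}$, and $k$ equal to the number of $\Mm$'s to the right of the newly created last $\Mm$-descent, the $q$-enumeration of intertwinings of $c-1$ $\Mm$'s with $b-1$ $\Rr$'s (with an $\Mm$ forced at the end) contributes $\qbinom{b+c-2}{b-1}{q}$.  A careful tally of the inversions arising from $L+M+1$'s interactions with the existing $\Mm$'s and $\Rr$'s produces the factor $q^{k+b+c-1}$, and the statistics contribute $r^{k-c} s^b t^c$; summing over admissible $k$, $c$, and $b$ gives the second summand of $E_1$.

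The main obstacle will be the bookkeeping of this second sub-case of $E_1$: one must enumerate the valid intertwinings of the trailing $\Mm$'s with the $\Rr$'s consistent with $321$-avoidance and $\gamma_\text{new} \geq 1$, match each choice of insertion position for $L+M+1$ with the correct change in $(\ell, \alpha, \beta, \gamma)$, and extract the correct $q$-binomial coefficient.  Once these contributions are assembled and added to the contributions from Regions~(a) and~(b), the identity between $F_1(r,s,t)$ and the right-hand side of Equation~\eqref{eq:affineFC} follows directly, with no additional algebra required.
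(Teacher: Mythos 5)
Your proposal follows essentially the same route as the paper: the generating-tree insertion of $L+M+1$, the two insertion regions yielding the four $F_1$ terms via geometric sums, and the split of $E_1$ into preimages in the $F_0$ class versus preimages with a single $\Mm$-descent, with the same parametrization by $m$, $i$, $k$, $b$, $c$ and the same $q$-binomial factors. The only place you defer detail is the tally producing $q^{k+b+c-1}$ (in the paper, $q^k$ comes from $L+M+1$ passing $k$ $\Mm$'s and $q^{b+c-1}$ from the forced $\Rr$--$\Mm$ inversions, not from $L+M+1$ itself), but the structure and conclusion are correct.
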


\begin{proof}
When there is an $\Mm$ to the right of an $\Rr$, there are two disjoint contiguous sets of positions into which we may insert $L+M+1$:


\vspace{-.15in}
\[\Ll\Mm\Mm\Ll\Ll\Mm\Mm^{\scriptscriptstyle\searrow}\Mm\Mm\Mm^{\scriptscriptstyle\searrow}{}_{\bullet}\Mm_\bullet\Mm_\bullet\Rr\Rr\Mm\Mm\Rr\Mm_\bullet\Rr_\bullet\Rr_\bullet\]

First, we may insert $L+M+1$ into a position between the rightmost $\Mm$-descent and the leftmost $\Rr$.  Suppose that there are $k$ $\Mm$'s to the right of where we insert $L+M+1$ and to the left of the leftmost $\Rr$, where $k$ ranges from $0$ to $\alpha(w)$.  In this case, $m(w)$ increases by one, $\l(w)$ increases by $k+\gamma(w)$, and $\alpha(w)$ becomes $k$, while $\beta(w)$ and $\gamma(w)$ remain unchanged.  This provides a contribution of 
\begin{equation*}
\begin{aligned}
&\sum_{w}\sum_{k=0}^{\alpha(w)} x^{m(w)+1}q^{l(w)+k+\gamma(w)}r^ks^{\beta(w)}t^{\gamma(w)}\\
&=x\sum_wx^{m(w)}q^{l(w)}s^{\beta(w)}(qt)^{\gamma(w)}\sum_{k=0}^{\alpha(w)}(qr)^k\\
&=\frac{x}{1-qr}\sum_wx^{m(w)}q^{l(w)}s^{\beta(w)}(qt)^{\gamma(w)}\big(1-(qr)^{\alpha(w)+1}\big)\\
&=\frac{x}{1-qr}F(x,q,1,s,qt)-\frac{xqr}{1-qr}F(x,q,qr,s,qt).\\
\end{aligned}
\end{equation*}

Alternatively, we may insert $L+M+1$ into a position after the rightmost $\Mm$.  Suppose that there are $k$ $\Rr$'s to the left of where we insert $L+M+1$, where $k$ ranges from $\beta(w)$ to $R$.  In this case, $m(w)$ and $\gamma(w)$ increase by one, $\l(w)$ increases by $k$, $\alpha(w)$ stays the same, and $\beta(w)$ becomes $k$.  This provides a contribution of 
\begin{equation*}
\begin{aligned}
&\sum_{w}\sum_{k=\beta(w)}^{R} x^{m(w)+1}q^{l(w)+k}r^{\alpha(w)}s^{k}t^{\gamma(w)+1}\\
&=xt\sum_wx^{m(w)}q^{l(w)}r^{\alpha(w)}t^{\gamma(w)}\sum_{k=\beta(w)}^{R}(qs)^k\\
&=\frac{xt}{1-qs}\sum_wx^{m(w)}q^{l(w)}r^{\alpha(w)}t^{\gamma(w)}\big((qs)^{\beta(w)}-(qs)^{R+1}\big)\\
&=\frac{xt}{1-qs}F(x,q,r,qs,t)-\frac{xt(qs)^{R+1}}{1-qs}F(x,q,r,1,t).\\
\end{aligned}
\end{equation*}

\medskip
We also must enumerate $(L)(M+1)(R)$-words $w$ with at least two $\Mm$-descents and at least one $\Mm$ to the right of an $\Rr$ such that removing element $L+M+1$ (and decrementing each larger element by one) gives an $(L)(M)(R)$-word $w'$ that is not in this class.  This can occur in two ways---either $L+M+1$ is the only $\Mm$ to the right of the leftmost $\Rr$ in $w$ or $L+M+1$ is to the left of the leftmost $\Rr$ and $w'$ has only one $\Mm$-descent.  

In the former case we insert $L+M+1$ into any position after the leftmost $R$ in an $(L)(M)(R)$-word with at least two $\Mm$-descents and no $\Mm$'s to the right of an $\Rr$.  Conditioning on $b$, the number of $\Rr$'s before $L+M+1$, gives a contribution of $xt\big(\sum_{b=1}^R (qs)^b\big)F_0(r)$.

In the latter case we follow a similar argument to that given for $E_0(r)$, except that after placing $L+M+1$ to the left of $k$ $\Mm$'s (for a contribution of $q^k$),  we intertwine any positive number of $\Rr$'s with a number of $\Mm$'s between $1$ and $k$.  If we intertwine $b$ $\Rr$'s with $c$ $\Mm$'s, we first pass one $\Rr$ past $c$ $\Mm$'s and one $\Mm$ past $b-1$ $\Rr$'s for a contribution of $q^{b+c-1}$, and there are $\qbinom{b+c-2}{b-1}{q}$ ways to intertwine the remaining $\Mm$'s and $\Rr$'s.  With $c$ $\Mm$'s to the right of the leftmost $\Rr$, there are $k-c$ $\Mm$'s after the last $\Mm$-descent and before the leftmost $\Rr$; this accounts for the $r^{k-c}s^bt^c$. \end{proof}

Finally, we note that Equation~\eqref{eq:affineFC2} has the form of Equation~\eqref{eq:MBMrecurrence}, which is solved in Example~\ref{ex:MBMrecurrence}, and that Equation~\eqref{eq:affineFC} is a functional equation of the form
\[\begin{aligned}
F(r,s,t)=e(r,s,t)&+f_{-1,0,1}F(1,s,qt) +f_{1,0,1}F(qr,s,qt)\\&+f_{0,-1,0}F(r,1,t)\hspace{.1in} +f_{0,1,0}F(r,qs,t),
\end{aligned}\]
to which Theorem~\ref{thm:general} also applies directly.  The coefficients of the generating function \[x^{R+L}G(x,q,1,1,1)=\sum x^{n(w)} q^{\l(w)}\] calculated using these functional equations agree with the formula given in \cite{HJ1}.

\section*{Acknowledgments}
We thank Toufik Mansour for discussions about the relationship between our work and his work on the kernel method.  We thank the editor Igor Pak for suggesting improvements in the writing style that greatly improved the clarity of the exposition.
We are grateful for the support of a 2011--2012 Queens College Undergraduate Research and Mentoring Education grant. C.\ R.\ H.\ Hanusa gratefully acknowledges the support of PSC-CUNY grant TRADA-43-127.


\bibliographystyle{siam}
\bibliography{Multivariate}

\end{document}